\DeclareMathAlphabet{\mathpzc}{OT1}{pzc}{m}{it}
\numberwithin{equation}{section}
\def\@settitle{\begin{center}%
		\baselineskip14\p@\relax
		\bfseries
		\uppercasenonmath\@title
		\@title
		\ifx\@subtitle\@empty\else
		\\[1ex]\uppercasenonmath\@subtitle
		\footnotesize\mdseries\@subtitle
		\fi
	\end{center}%
}
\def\subtitle#1{\gdef\@subtitle{#1}}
\def\@subtitle{}
\begin{document}

\expandafter\let\expandafter\oldproof\csname\string\proof\endcsname
\let\oldendproof\endproof
\renewenvironment{proof}[1][\proofname]{%
	\oldproof[\scshape\hspace{1em}#1]%
}{\oldendproof}

\newtheoremstyle{mystyle_thm}
{6pt}
{6pt}
{\itshape}
{1em}
{\scshape}
{.}
{.5em}
{}%

\newtheoremstyle{mystyle_def}
{6pt}
{6pt}
{}
{1em}
{\scshape}
{.}
{.5em}
{}%

\theoremstyle{mystyle_thm}

\newtheorem{theorem}{Theorem}[section]
\newtheorem{lemma}[theorem]{Lemma}
\newtheorem{proposition}[theorem]{Proposition}
\newtheorem{corollary}[theorem]{Corollary}
\newtheorem{Ass}[theorem]{Assumption}
\newtheorem{condition}[theorem]{Condition}
\newtheorem{definition}[theorem]{Definition}

\theoremstyle{mystyle_def}

\newtheorem{example}[theorem]{Example}
\newtheorem{remark}[theorem]{Remark}
\newtheorem{SA}[theorem]{Standing Assumption}
\newtheorem*{discussion}{Discussion}
\newtheorem{remarks}[theorem]{Remark}
\newtheorem*{notation}{Remark on Notation}
\newtheorem{application}[theorem]{Application}

\newcommand{\of}{[\hspace{-0.06cm}[}
\newcommand{\gs}{]\hspace{-0.06cm}]}

\newcommand\llambda{{\mathchoice
		{\lambda\mkern-4.5mu{\raisebox{.4ex}{\scriptsize$\backslash$}}}
		{\lambda\mkern-4.83mu{\raisebox{.4ex}{\scriptsize$\backslash$}}}
		{\lambda\mkern-4.5mu{\raisebox{.2ex}{\footnotesize$\scriptscriptstyle\backslash$}}}
		{\lambda\mkern-5.0mu{\raisebox{.2ex}{\tiny$\scriptscriptstyle\backslash$}}}}}

\newcommand{\1}{\mathds{1}}

\newcommand{\F}{\mathbf{F}}
\newcommand{\G}{\mathbf{G}}

\newcommand{\B}{\mathbf{B}}

\newcommand{\M}{\mathcal{M}}

\newcommand{\la}{\langle}
\newcommand{\ra}{\rangle}

\newcommand{\lle}{\langle\hspace{-0.085cm}\langle}
\newcommand{\rre}{\rangle\hspace{-0.085cm}\rangle}
\newcommand{\blle}{\Big\langle\hspace{-0.155cm}\Big\langle}
\newcommand{\brre}{\Big\rangle\hspace{-0.155cm}\Big\rangle}

\newcommand{\X}{\mathsf{X}}

\newcommand{\tr}{\operatorname{tr}}
\newcommand{\N}{{\mathbb{N}}}
\newcommand{\cadlag}{c\`adl\`ag }
\newcommand{\on}{\operatorname}
\newcommand{\oP}{\overline{P}}
\newcommand{\oO}{\mathcal{O}}
\newcommand{\D}{\mathsf{D}} 
\newcommand{\bx}{\mathsf{x}}
\newcommand{\bb}{\hat{b}}
\newcommand{\bs}{\hat{\sigma}}
\newcommand{\bv}{\hat{v}}
\renewcommand{\v}{\mathfrak{m}}
\newcommand{\ob}{\bar{b}}
\newcommand{\oa}{\bar{a}}
\newcommand{\os}{\widehat{\sigma}}
\renewcommand{\j}{\varkappa}
\newcommand{\scl}{\ell}
\newcommand{\Y}{\mathscr{Y}}
\newcommand{\Z}{\mathscr{Z}}
\newcommand{\T}{\mathcal{T}}
\newcommand{\con}{\mathsf{c}}
\newcommand{\nk}{\hspace{-0.25cm}{{\phantom A}_k^n}}
\newcommand{\nl}{\hspace{-0.25cm}{{\phantom A}_1^n}}
\newcommand{\nm}{\hspace{-0.25cm}{{\phantom A}_2^n}}
\newcommand{\n}{\hspace{-0.35cm}{\phantom {Y_s}}^n}
\newcommand{\nme}{\hspace{-0.35cm}{\phantom {Y_s}}^{n - 1}}
\renewcommand{\o}{\hspace{-0.35cm}\phantom {Y_s}^0}
\newcommand{\e}{\hspace{-0.4cm}\phantom {U_s}^1}
\newcommand{\z}{\hspace{-0.4cm}\phantom {U_s}^2}
\newcommand{\iii}{|\hspace{-0.05cm}|\hspace{-0.05cm}|}
\newcommand{\co}{\overline{\on{co}}}
\renewcommand{\k}{\mathsf{k}}
\newcommand{\ovb}{\overline{b}}
\newcommand{\ova}{\overline{a}}
\newcommand{\s}{\mathfrak{s}}
\newcommand{\opsi}{\overline{\Psi}}
\newcommand{\ol}{\mathcal{L}}
\newcommand{\cW}{\mathscr{W}}
\newcommand{\cU}{\mathcal{U}}
\newcommand{\oD}{\overline{D}}
\newcommand{\ua}{\underline{a}}
\newcommand{\ou}{\overline{b}}
\newcommand{\uu}{\underline{b}}

\renewcommand{\epsilon}{\varepsilon}
\renewcommand{\rho}{\varrho}

\newcommand{\fPs}{\fP_{\textup{sem}}}
\newcommand{\fPas}{\mathfrak{S}_{\textup{ac}}}
\newcommand{\rrarrow}{\twoheadrightarrow}
\newcommand{\cA}{\mathcal{A}}
\newcommand{\ocA}{\mathcal{U}}
\newcommand{\cR}{\mathcal{R}}
\newcommand{\cK}{\mathcal{K}}
\newcommand{\cQ}{\mathcal{Q}}
\newcommand{\cF}{\mathcal{F}}
\newcommand{\cE}{\mathcal{E}}
\newcommand{\cC}{\mathcal{C}}
\newcommand{\cD}{\mathcal{D}}
\newcommand{\bC}{\mathbb{C}}
\newcommand{\cH}{\mathcal{H}}
\newcommand{\bth}{\overset{\leftarrow}\theta}
\renewcommand{\th}{\theta}
\newcommand{\cG}{\mathcal{G}}
\newcommand{\fPasn}{\mathfrak{S}^{\textup{ac}, n}_{\textup{sem}}}
\newcommand{\CLM}{\mathfrak{M}^\textup{ac}_\textup{loc}}
\newcommand{\Sd}{\mathcal{S}^\textup{sp}_{\textup{d}}}
\newcommand{\Sc}{\mathcal{S}}
\newcommand{\Sac}{\mathcal{S}_\textup{ac}}
\newcommand{\A}{\mathsf{A}}
\newcommand{\Td}{\mathsf{T}^\textup{d}}
\renewcommand{\t}{\mathfrak{t}}

\newcommand{\bR}{\mathbb{R}}
\newcommand{\nnabla}{\nabla}
\newcommand{\f}{\mathfrak{f}}
\newcommand{\g}{\mathfrak{g}}
\newcommand{\oconv}{\overline{\on{co}}\hspace{0.075cm}}
\renewcommand{\a}{\mathfrak{a}}
\renewcommand{\b}{\mathfrak{b}}
\renewcommand{\d}{\mathsf{d}}
\newcommand{\bS}{\mathbb{S}^d_+}
\newcommand{\p}{\mathsf{p}}
\newcommand{\dr}{r} 
\newcommand{\m}{\mathfrak{m}}
\newcommand{\Q}{Q}
\newcommand{\usc}{\textit{USC}}
\newcommand{\lsc}{\textit{LSC}}
\newcommand{\q}{\mathfrak{q}}
\renewcommand{\X}{\mathscr{X}}
\newcommand{\W}{\mathscr{W}}
\newcommand{\fP}{\mathcal{P}}
\newcommand{\w}{\mathsf{w}}
\newcommand{\oM}{\mathsf{M}}
\newcommand{\oZ}{\mathsf{Z}}
\newcommand{\oK}{\mathsf{K}}
\renewcommand{\Re}{\operatorname{Re}}
\newcommand{\cCk}{\mathsf{c}_k}
\newcommand{\C}{\mathsf{C}}
\newcommand{\cP}{\mathcal{P}}
\newcommand{\oPi}{\overline{\Pi}}
\newcommand{\cI}{\mathcal{I}}
\renewcommand{\P}{\mathbf{P}}
\renewcommand{\X}{\mathsf{X}}
\renewcommand{\Q}{\mathsf{Q}}
\renewcommand{\Y}{\mathsf{Y}}
\renewcommand{\Z}{\mathsf{Z}}
\newcommand{\E}{\mathbf{E}}
\renewcommand{\Q}{\mathbf{Q}}

\renewcommand{\emptyset}{\varnothing}

\allowdisplaybreaks

\makeatletter
\@namedef{subjclassname@2020}{%
	\textup{2020} Mathematics Subject Classification}
\makeatother

 \title[Representation Property for general diffusion semimartingales]{On the Representation Property for 1d general diffusion semimartingales} 
\author[D. Criens]{David Criens}
\address{D. Criens - Albert-Ludwigs-University of Freiburg, Ernst-Zermelo-Str. 1, 79104 Freiburg, Germany.}
\email{david.criens@stochastik.uni-freiburg.de}

\author[M. Urusov]{Mikhail Urusov}
\address{M. Urusov - University of Duisburg-Essen, Thea-Leymann-Str. 9, 45127 Essen, Germany.}
\email{mikhail.urusov@uni-due.de}

\keywords{
Representation property; semimartingale; general diffusion; scale function; speed measure; martingale problem; extreme solution}

\subjclass[2020]{60J60; 60G48; 60H10}

\thanks{}
\date{\today}

\maketitle

\begin{abstract}
A general diffusion semimartingale is a one-dimensional path-continuous semimartingale that is also a regular strong Markov process. We say that a continuous semimartingale has the representation property if all local martingales w.r.t. its canonical filtration have an integral representation w.r.t. its continuous local martingle part.
The representation property is of fundamental interest in the field of mathematical finance, where it is strongly connected to market completeness.
The main result from this paper shows that the representation property holds for a general diffusion semimartingale (that is not started in an absorbing boundary point) if and only if its scale function is
(locally)
absolutely continuous on the interior of the state space.
As an application of our main theorem, we deduce that
the laws of
general diffusion semimartingales with such scale functions are extreme points of their semimartingale problems,
and, moreover, we construct a general diffusion semimartingale whose law is \emph{no} extreme point of its semimartingale problem. These observations contribute to a solution of problems posed by J.~Jacod and M.~Yor, and D.~W.~Stroock and M.~Yor, on the extremality of strong Markov solutions.
\end{abstract}

\section{Introduction}
A (one-dimensional) \emph{general diffusion}
is a time- and path-continuous regular strong Markov process (cf., e.g., \cite{breiman1968probability,itokean74,RY,RW2}).
The class of general diffusions is very rich. For example, solutions to stochastic differential equations (SDEs) under the famous Engelbert--Schmidt conditions (\cite{engel_schmidt_3}) are general diffusions, but also sticky and skew Brownian motions, or certain processes with
(instantaneously or slowly)
reflecting boundaries, see also Examples \ref{ex: SDE}, \ref{ex: sticky BM} and~\ref{ex: chain} below. It is well-known that general diffusions are characterized by two deterministic objects: the scale functions and the speed measure. In the following, we call them the {\em diffusion characteristics}. A particularly interesting subclass of general diffusions are those which are semimartingales and we call them {\em general diffusion semimartingales}. The semimartingale property of general diffusions was studied profoundly in the fundamental work \cite{CinJPrSha} by E.~\c Cinlar, J.~Jacod, P.~Protter and M.~J.~Sharpe.

\smallskip 
In this paper we answer the question whether general diffusion semimartingales have the representation property as defined in the seminal monograph \cite{JS} by J.~Jacod and A.~N.~Shiryaev, i.e., whether for a given general diffusion semimartingale all local martingales w.r.t. its canonical filtration have an integral representation w.r.t. its continuous local martingale part. The main result from this article shows that the representation property is equivalent to
(local)
absolute continuity of the scale function on the interior of the state space, which appears to be a simple deterministic characterization.
This generalizes Theorem~7 in \cite{EngelbertHess1981}, where it is shown by a different method that general diffusions on natural scale without reflecting boundaries always have the representation property.
Based on our main result, we also provide an example of a general diffusion semimartingale that fails to have the representation property, see Example~\ref{ex: no RP} below.

\smallskip 
Our interest in this question is motivated by mathematical finance. More specifically, as indicated in the monographs of M.~Jeanblanc, M.~Yor and M.~Chesney \cite{CJY}, G.~Kallianpur and R.~L.~Karandikar \cite{KalliKara} or A.~N.~Shiryaev \cite{shir}, or the papers by C.~Kardaras and J.~Ruf~\cite{kardarasruf} or A.~N.~Shiryaev and A.~S.~Cherny \cite{CherShir}, the representation property is intrinsically connected to completeness of an associated financial market model. In our previous paper \cite{criensurusov_23}, we already established precise characterizations of the absence of arbitrage in the sense of NA (``no arbitrage''), NUBPR (``no unbounded profit with bounded risk'') and NFLVR (``no free lunch with vanishing risk'') for general diffusion semimartingale markets in terms of the diffusion characteristics. We think that the main result from this paper provides a first important step into the direction of a deterministic characterization of market completeness. 

\smallskip 
The representation property is closely connected to so-called semimartingale problems as studied in the monograph \cite{JS}. A probability measure \(\Q\) on the Wiener space is said to be a solution to the semimartingale problem associated to a pair \((B, C)\) of candidate semimartingale characteristics if the coordinate process is a \(\Q\)-semimartingale with the candidate \((B, C)\) as its semimartingale characteristics. The convex set of solutions is denoted by \(\mathscr{S} (B, C)\). 
It is proved in \cite{JS} that under \(\Q \in \mathscr{S} (B, C)\) the coordinate process satisfies the representation property
and the initial \(\sigma\)-field is \(\Q\)-trivial
if and only if \(\Q\) is an extreme point in \(\mathscr{S} (B, C)\). Consequently, recalling also Blumenthal's zero-one law, our main result shows that a general diffusion semimartingale, which is not started in an absorbing boundary point, is an extreme point of the corresponding semimartingale problem if and only if its scale function is
(locally)
absolutely continuous on the interior of the state space. At this point, we stress that semimartingale problems associated to general diffusion semimartingales may have infinitely many solutions. For example, this is the case for the semimartingale problem of a sticky Brownian motion, see Example~\ref{ex: SBM} below. 

The problem to identify strong Markov solutions as extreme points of certain non-well-posed (semi)martingale problems was already considered by J.~Jacod and M.~Yor \cite{JY_77}. For the special case of Girsanov's SDE, they proved that all strong Markov solutions are extreme points. 
The question whether such a result holds also for more general semimartingale problems, possibly multidimensional and with jumps, was left open. 
A related problem was considered by D.~W.~Stroock and M.~Yor \cite{StrYor} in the context of Markovian selections to non-well-posed (semi)martingale problems. 
We present a counterexample showing that there are strong Markov solutions that are {\em no} extreme points, see Example~\ref{ex: no RP}.

\smallskip
The remainder of this paper is organized as follows. In Section~\ref{sec: main} we provide our main results. The proofs are given in Section~\ref{sec: pf}.

\section{The Representation Property for General Diffusion Semimartingales} \label{sec: main}

To ease our presentation, we will pose ourselves in the canonical diffusion setting on a path space (cf., e.g., \cite[Definitions V.45.1 and~V.45.2]{RW2}).
Let \(J \subset \bR\) be a bounded or unbounded, closed, open or half-open interval. 
We define \(\Omega\triangleq C(\mathbb R_+; \bR)\), the space of continuous functions \(\mathbb{R}_+ \to \bR\). The coordinate process on \(\Omega\) is denoted by \(\X\), i.e., \(\X_t (\omega) = \omega(t)\) for \(t \in \mathbb{R}_+\) and \(\omega \in \Omega\). 
We also set \(\mathcal{F} \triangleq \sigma (\X_s, s \geq 0)\) and \(\mathcal{F}_t \triangleq \bigcap_{s > t}\sigma (\X_r, r \leq s)\) for all~\(t \in \mathbb{R}_+\). 

A map \((J \ni x \mapsto \P_x)\), from \(J\) into the set of probability measures on \((\Omega, \mathcal{F})\), is called a
\emph{regular continuous strong Markov proccess} or a
\emph{general diffusion}
(with state space $J$) if the following hold:
\begin{enumerate}
	\item[\textup{(i)}] $\P_x(\X_0 = x) = \P_x(C(\mathbb R_+;J))=1$ for all $x\in J$;
	\item[\textup{(ii)}] the map \(x \mapsto \P_x(A)\) is measurable for all \(A \in \mathcal{F}\);
	\item[\textup{(iii)}] the \emph{strong Markov property} holds, i.e., for any stopping time \(\tau\) and any \(x \in J\), the kernel \(\P_{\X_\tau}\) is the regular conditional \(\P_x\)-distribution of \((\X_{t + \tau})_{t \geq 0}\) given \(\mathcal{F}_{\tau}\) on \(\{\tau < \infty\}\), i.e., \(\P_x\)-a.s. on~\(\{\tau < \infty\}\)
	\[  
	\P_x ( \X_{\cdot + \tau} \in d\omega \mid \mathcal{F}_\tau) = \P_{\X_\tau} (d \omega);
	\]
	\item[\textup{(iv)}] the diffusion is \emph{regular}, i.e., for all $x\in J^\circ$ and $y\in J$,
	\[
	\P_x(T_y<\infty)>0,
	\]
	where $T_y\triangleq\inf\{t\ge0:\X_t=y\}$ (with the usual convention $\inf\emptyset\triangleq\infty$).
\end{enumerate} 

It is well-known that a general diffusion \((x \mapsto \P_x)\) is characterized by two deterministic objects, the scale function \(\s\) and the speed measure \(\m\). The former is a strictly increasing continuous function from \(J\) into \(\bR\), while the latter is a measure on \((J, \mathcal{B} (J))\) that is locally finite on \((J^\circ, \mathcal{B} (J^\circ))\) and strictly positive, i.e., \(\m ([a, b]) > 0\) for all \(a, b \in J^\circ\) with \(a < b\). The pair \((\s, \m)\) is sometimes called the ``characteristics of the general diffusion''. For precise definitions and more details on these concepts we refer to the seminal monograph \cite{itokean74} by K.~It\^o and H.~P.~McKean. More gentle introductions can be found in \cite{breiman1968probability,freedman,kallenberg,RY,RW2}.

\begin{example}[SDEs under the Engelbert--Schmidt conditions] \label{ex: SDE}
Let \(\Y\) be a (possibly explosive)
unique in law weak
solution to the SDE
	\[
	d \Y_t = \mu (\Y_t) dt + \sigma (\Y_t) d W_t, \quad \Y_0 = x_0, 
	\]
	where \(\mu \colon J^\circ \to \bR\) and \(\sigma \colon J^\circ \to \bR\) satisfy the Engelbert--Schmidt conditions 
\[
\forall x\in J^\circ\colon
\sigma(x)\ne0
\qquad\text{and}\qquad
\frac{1+|\mu|}{\sigma^2} \in L^1_\textup{loc} (J^\circ),
\]
and the accessible boundary points are stipulated to be absorbing.
We refer to the paper \cite{engel_schmidt_3} or the monograph \cite[Chapter 5.5]{KaraShre} for a detailed discussion of SDEs under the Engelbert--Schmidt conditions. 
	
	It is well-known (\cite[Corollary~4.23]{engel_schmidt_3}) that SDEs under the Engelbert--Schmidt conditions give rise to a regular strong Markov family (possibly with a state space \(J \subset [- \infty, + \infty]\)) with scale function
			\begin{align*}
		\s (x) = \int^x \exp \Big\{ - \int^y \frac{2 \mu (z)}{\sigma^2 (z)} \, dz \, \Big\} \, dy, \quad x \in J^\circ,
	\end{align*} 
and speed measure 
\[
\m (dx) = \frac{dx}{\s' (x) \sigma^2 (x)} \text{ on } J^\circ.
\]
At this point, we stress that \(\Y\) is not necessarily a semimartingale. The simplest possible problem is that \(\Y\) may reach \(\infty\) or \(- \infty\) in finite time with positive probability.
We refer to \cite{MU15} for a thorough discussion of the semimartingale property of explosive solutions to~SDEs.
\end{example}

\begin{example}[Sticky Brownian motion] \label{ex: sticky BM}
Another interesting class of diffusions are SDEs with stickiness. The most prominent example is the sticky Brownian motion\footnote{More precisely,
Brownian motion with state space $\bR$, sticky at zero.},
which is the solution \(\Y\) to the system 
	\[
	d \Y_t = \1_{\{ \Y_t \not = 0\}} d W_t, \quad \1_{\{\Y_t = 0\}} dt = \rho\, d L^0_t (\Y),
	\]
	where \(\rho > 0\) is a so-called stickiness parameter and \(L^0 (\Y)\) is the semimartingale local time of the solution \(\Y\) in zero. For a discussion of this representation we refer to the paper \cite{EngPes}.
	
	The sticky Brownian motion is a general diffusion on natural scale, i.e., with the identity as scale function,
with state space $\bR$
and speed measure 
	\[
	\m (dx) = dx  + \rho \, \delta_0 (dx).	
	\]
At this point, we notice that the sticky Brownian motion cannot be realized as a solution to an SDE like in Example~\ref{ex: SDE}, as the speed measure in Example~\ref{ex: SDE} is always absolutely continuous w.r.t. the Lebesgue measure.
It is clear that one can also consider diffusions with more than one 
sticky point.
\end{example}

\begin{example}[General diffusions with a countable dense set of sticky points] \label{ex: chain}
	The following example is due to W.~Feller and H.~P.~McKean, see \cite[Section~2.12]{freedman}.
	Let \(D = \{d_1, d_2, \dots \}\) be a countable dense subset of \(\bR\) and let \(\m\) be a measure on \((\bR, \mathcal{B}(\bR))\) that is concentrated on \(D\) and such that \(\m (\{d_k\}) > 0\) and \(\sum_{k = 1}^\infty \m (\{d_k\}) < \infty\). Then, \(\m\) is a valid speed measure. Let \((\bR\ni x \mapsto \P_x)\) be the general diffusion on natural scale with
state space $\bR$ and
speed measure \(\m\).
While this process is continuous, it has quite peculiar paths in the sense that it spends positive Lebesgue time in every point $d_k$ (without intervals of constancy) and zero Lebesgue time in every point in $\bR\setminus D$. Moreover,
one can prove that \(\P_x (\X_t \in D) = 1\) for all
\(x\in\bR\) and \(t>0\),
cf. \cite[Lemma~144, p.~163]{freedman}
or Section~4.11 in \cite{itokean74}.
\end{example}

Our inputs are a state space $J$, a scale function $\s$ and a speed measure $\m$.
They define a general diffusion $(J\ni x\mapsto\P_x)$.
In what follows we say that
$x_0\in J$
is \emph{non-absorbing} if
$x_0\in J^\circ$
or $x_0$ is an (instantaneously or slowly) reflecting boundary point of $J$.
We work under the following standing assumption. 

\begin{SA} \label{SA: 1}
	For a fixed non-absorbing initial value \(x_0 \in J\), the underlying filtered probability space \((\Omega, \cF, (\cF_t)_{t \geq 0}, \P_{x_0})\) is the
canonical setup introduced above,
and the coordinate process \(\X\) is a semimartingale. 
\end{SA}

\begin{remark}
It is well-known that
the requirement for $\X$ to be a semimartingale in
Standing Assumption~\ref{SA: 1} is merely a condition on the inverse scale function
	\[
	\q \triangleq \s^{-1}.
	\]
	In particular, Standing~Assumption~\ref{SA: 1} implies that the restriction \(\q|_{\s(J^\circ)}\) is the difference of two convex functions on \(\s(J^\circ)\), which entails that its right- and left derivatives exist. We denote them by 
	\[
	\q'_+ (x) \triangleq \lim_{h \searrow 0} \frac{\q (x + h) - \q (x)}{h}, \qquad \q'_- (x) \triangleq \lim_{h \searrow 0} \frac{\q (x) - \q (x - h)}{h}, \qquad x \in \s(J^\circ). 
	\]
	It is well-known that \(\{\q'_+ \not = \q'_-\}\) is at most countable (cf. \cite[Section~3.16]{freedman}).
	For more details on the precise properties of \(\q\) that are implied by Standing~Assumption~\ref{SA: 1}, we refer to \cite[Section~5]{CinJPrSha}. 
\end{remark}

\begin{definition}[Representation property]
We say that the {\em representation property (RP)} holds if every local martingale \(\mathsf{M} = (\mathsf{M}_t)_{t \geq 0}\) has a representation 
\[
\mathsf{M} = \mathsf{M}_0 + \int_0^\cdot H_s d \X^c_s, 
\]
where \(H \in L^2_\textup{loc} (\X^c)\) and \(\X^c\) is the continuous local martingale part of \(\X\).
\end{definition}

We are in the position to present our main result. Its proof is given in Section~\ref{sec: pf} below.

\begin{theorem} \label{theo: complete}
	The following are equivalent:
	\begin{enumerate}
	\item[\textup{(a)}] The RP holds.
	\item[\textup{(b)}] \(\mu_L ( \{ x \in \s (J^\circ) \colon \q'_+ (x) = 0 \}) = 0\), where \(\mu_L\) denotes the Lebesgue measure.
	\item[\textup{(c)}] The scale function \(\s\) is absolutely continuous on all compact subintervals of~$J^\circ$.
	\end{enumerate}
\end{theorem}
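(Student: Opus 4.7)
The plan is to isolate a purely analytic equivalence $(b)\Leftrightarrow(c)$ and then reduce the probabilistic equivalence $(a)\Leftrightarrow(b)$ to a statement about the natural-scale diffusion $M\triangleq\s(\X)$ via the It\^o--Tanaka formula applied to $\q$.

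For $(b)\Leftrightarrow(c)$: by Standing Assumption~\ref{SA: 1}, $\q|_{\s(J^\circ)}$ is a difference of two convex functions, hence locally Lipschitz and in particular locally absolutely continuous, with $\q(\beta)-\q(\alpha)=\int_\alpha^\beta \q'_+\,d\mu_L$ on every compact $[\alpha,\beta]\subset\s(J^\circ)$. The Banach--Zarecki theorem applied to the strictly increasing continuous inverse $\s=\q^{-1}$ then yields that $\s$ is absolutely continuous on compact subintervals of $J^\circ$ if and only if $\q'_+>0$ Lebesgue a.e.\ on $\s(J^\circ)$, which is precisely~$(b)$.

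For $(a)\Leftrightarrow(b)$, the canonical filtration of $M$ coincides with that of $\X$ by strict monotonicity of $\s$, and $M$ is itself a general diffusion on natural scale with state space $\s(J)$, hence a continuous local martingale under $\P_{x_0}$. It\^o--Tanaka applied to $\X=\q(M)$ gives
\[
\X_t=\X_0+\int_0^t\q'_+(M_s)\,dM_s+\tfrac12\int L^y_t(M)\,\nu(dy),
\]
where $\nu$ denotes the signed second-derivative measure of $\q|_{\s(J^\circ)}$ and $L^y(M)$ is the semimartingale local time of $M$. Thus $\X^c_t=\int_0^t\q'_+(M_s)\,dM_s$ and $d\langle\X^c\rangle_t=\q'_+(M_t)^2\,d\langle M\rangle_t$. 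Assuming the RP for $M$ with respect to its own canonical filtration (extending \cite[Theorem~7]{EngelbertHess1981} to the possibly reflecting setting), the direction $(b)\Rightarrow(a)$ proceeds as follows: given a local martingale $\mathsf{M}$, write $\mathsf{M}-\mathsf{M}_0=\int K\,dM$ for some $K\in L^2_\textup{loc}(M)$, set $H\triangleq K\,\1_{\{\q'_+(M)>0\}}/\q'_+(M)$, and use the occupation times identity
\[
\int_0^t\1_{\{\q'_+(M_s)=0\}}\,d\langle M\rangle_s=\int_{\s(J^\circ)}\1_{\{\q'_+=0\}}(y)\,L^y_t(M)\,dy=0,
\]
which holds by $(b)$, to conclude $\int K\,dM=\int H\q'_+(M)\,dM=\int H\,d\X^c$. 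For the converse, if $A\triangleq\{\q'_+=0\}\cap\s(J^\circ)$ had positive Lebesgue measure, the local martingale $N_t\triangleq\int_0^t\1_A(M_s)\,dM_s$ would be non-trivial --- by regularity every $y\in\s(J^\circ)$ is accessible from $\s(x_0)$, so $\E_{x_0}[L^y_t(M)]>0$ for $t$ large, and Fubini yields $\E_{x_0}[\langle N\rangle_t]>0$ --- yet any representation $N=\int H\,d\X^c=\int H\q'_+(M)\,dM$ would force $H\q'_+(M)=\1_A(M)$ modulo $d\langle M\rangle\otimes d\P_{x_0}$, which is impossible since the left side vanishes on $\{M\in A\}$.

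The main hurdles I anticipate are the extension of the representation property for natural-scale diffusions $M$ to the case of possibly reflecting boundary behaviour (as can occur in Examples~\ref{ex: sticky BM} and~\ref{ex: chain}, in particular when $\s(x_0)$ lies at a reflecting endpoint and \cite[Theorem~7]{EngelbertHess1981} does not directly apply), together with the strict positivity $\E_{x_0}[\langle N\rangle_t]>0$ in the converse, which requires a careful quantitative use of regularity to ensure $\E_{x_0}[L^y_t(M)]>0$ on a positive-measure subset of~$A$ rather than just pointwise.
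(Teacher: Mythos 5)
Your proof of (b) $\Leftrightarrow$ (c) via local Lipschitz continuity of $\q$ and the Banach--Zarecki (Luzin property (N)) characterization of absolute continuity of the inverse is correct and is in fact the route the authors mention as an alternative (cf.\ their citations to Bogachev and Leoni); the paper itself prefers a computation with generalized Radon--Nikodym derivatives. Your (a) $\Rightarrow$ (b) argument is essentially the paper's: use RP to write the integral of $\1_{\{\q'_+(\s(\X))=0\}}$ against $\s(\X)^c$ as an integral against $\X^c$, compare quadratic variations, invoke the occupation time formula, and then use strict positivity of local time on the traversed interval together with regularity. Two small corrections are needed: you must integrate against $M^c = \s(\X)^c$ rather than $M = \s(\X)$ (these differ when there is a reflecting endpoint, since then $M$ is not a local martingale), and the paper establishes the needed positivity of local time pathwise up to a hitting time rather than in expectation, which is cleaner than your Fubini argument.

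The genuine gap is in (b) $\Rightarrow$ (a). You assert that $M = \s(\X)$ is ``a general diffusion on natural scale with state space $\s(J)$, hence a continuous local martingale under $\P_{x_0}$.'' This is \emph{false} in the presence of a reflecting boundary: for a natural-scale diffusion reflected at $\s(0)$ one has only that $\s(\X) - \tfrac12 L^{\s(0)}(\s(\X))$ is a local martingale (see the paper's Case (ii)). Consequently, the planned reduction to an ``RP for $M$'' via \cite[Theorem~7]{EngelbertHess1981} (which explicitly excludes reflecting boundaries) does not go through, and the required extension of that theorem to the reflecting case is not a side lemma but essentially a full instance of the result you are trying to prove. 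This is precisely why the paper does not reduce to Engelbert--Hess: it runs a self-contained argument by contradiction --- if RP fails, construct $\Q \sim \P_{x_0}$ via a bounded martingale $\Z$ orthogonal to $\X^c$, show via Girsanov that $\X^c$ remains a local $\Q$-martingale, derive the test-function identity
\[
d f(\s(\X_t)) - g(\s(\X_t))\,dt = f'_+(\s(\X_t))\,\frac{\1_{\{\q'_+(\s(\X_t))\neq 0\}}}{\q'_+(\s(\X_t))}\,d\X^c_t,
\]
and invoke the uniqueness in the associated martingale problem to conclude $\Q=\P_{x_0}$, a contradiction. This works uniformly across the open, reflecting, and absorbing cases, at the cost of carefully tracking the local-time-at-boundary corrections (the paper's Cases (ii) and (iii)). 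Your approach would handle the open case (and, with care, the absorbing one) but leaves the reflecting case entirely open.
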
 

\begin{remark}[Role of the initial value]
As (b) and (c) from Theorem~\ref{theo: complete} are independent of the generic initial value \(x_0 \in J\) from Standing Assumption~\ref{SA: 1}, the RP holds for all non-absorbing
\(x_0 \in J\) if and only if it holds for some non-absorbing \(x_0 \in J\). In case \(x_0 \in J \setminus J^\circ\) is an absorbing boundary point, the RP always holds, in particular, independently of the scale function, because then all local martingales on \((\Omega, \cF, (\cF_t)_{t \geq 0}, \P_{x_0})\) are constant. 
\end{remark}

\begin{example}
Theorem~\ref{theo: complete} yields that,
for the SDE setting of Example~\ref{ex: SDE}
as well as
for the sticky models from Examples \ref{ex: sticky BM} and~\ref{ex: chain}, the RP holds.
In fact, for what concerns Example~\ref{ex: SDE}, the fact that the RP holds is well-known and follows, e.g., from \cite[Theorem~III.4.29]{JS}, as the uniqueness in law property of the SDE from Example~\ref{ex: SDE} implies uniqueness of the solution of the associated semimartingale problem.
Notice, however, that such an argumentation does not apply to arbitrary general diffusions. For example, below we observe that there is {\em no uniqueness} of the semimartingale problem for the sticky Brownian motion. Let us also mention that Examples~\ref{ex: sticky BM} and \ref{ex: chain} are covered by Theorem~7 in \cite{EngelbertHess1981}. Below, we discuss a negative result that is not in the scope of \cite{EngelbertHess1981}.
\end{example}

In the previous example, we discussed a variety of positive situations where the RP holds. It is also interesting to provide a negative result, i.e., to give an example where the RP fails. 

\begin{example}[General diffusion semimartingale for which the RP fails] \label{ex: no RP}
	We now provide an example of a general diffusion semimartingale that fails to have the RP. 

\begin{lemma}\label{lem:140624a1}
	There exists a strictly increasing continuous function \(\q \colon [0, 1] \to [0, 1]\) that is continuously differentiable on \((0, 1)\) with a derivative \(\q'\) that extends to a Lipschitz continuous function on \([0, 1]\) and that has the property \(\mu_L ( \{z \in (0, 1) \colon \q' (z) = 0\} ) > 0\). 
\end{lemma}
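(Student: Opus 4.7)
The plan is to realize $\q$ as the normalized integral of the distance function to a carefully chosen fat Cantor set.

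First, I would fix a Smith--Volterra--Cantor set $K \subset [0,1]$ with $\mu_L(K) > 0$ and empty interior. Concretely, one may remove from $[0,1]$ at the $n$-th stage the centred open interval of length $4^{-n}$ from each of the $2^{n-1}$ currently surviving closed intervals. The total removed length is $\sum_{n \ge 1} 2^{n-1}\cdot 4^{-n} = 1/2$, so $\mu_L(K) = 1/2$, and the diameters of the surviving intervals at stage $n$ tend to $0$, which forces $K$ to have empty interior.

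Second, I would set
\[
d(x) \triangleq \operatorname{dist}(x, K), \qquad x \in [0,1],
\]
so that $d$ is $1$-Lipschitz, nonnegative, vanishes exactly on $K$, and is strictly positive on $[0,1]\setminus K$. Normalizing, define
\[
\q(x) \triangleq \frac{1}{c} \int_0^x d(t)\, dt, \qquad c \triangleq \int_0^1 d(t)\, dt \in (0,\infty),
\]
so that $\q(0) = 0$ and $\q(1) = 1$. By continuity of $d$, the fundamental theorem of calculus gives $\q'(x) = d(x)/c$ for every $x \in (0,1)$; this derivative extends to a Lipschitz continuous function on $[0,1]$ because $d$ is $1$-Lipschitz. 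Moreover,
\[
\{x \in (0,1) : \q'(x) = 0\} = K \cap (0,1)
\]
has positive Lebesgue measure by construction.

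The only step that is not purely mechanical is verifying the strict monotonicity of $\q$, i.e., that $\int_a^b d(t)\, dt > 0$ for every $0 \le a < b \le 1$. This is precisely where the empty-interior property of $K$ is needed: since $K$ contains no open interval, one can pick $x_0 \in (a,b)\setminus K$, and by continuity of $d$ the function $d$ is strictly positive on a whole neighbourhood of $x_0$ inside $(a,b)$, yielding a strictly positive integral. I expect this to be the main (and really the only) non-routine point of the argument; all remaining properties follow directly from the definitions.
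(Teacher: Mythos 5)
Your proposal is correct and takes essentially the same approach as the paper: both integrate the distance function to a closed, nowhere dense set of positive measure (a fat Cantor set) and use the empty interior to establish strict monotonicity. The only cosmetic difference is your normalization by $c = \int_0^1 d$, which the paper omits since the codomain condition $\q([0,1]) \subset [0,1]$ already holds without it.
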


We thank Oleg Dragoshansky \cite{OD} for explaining us another interesting example for a function~$\q$ as in Lemma~\ref{lem:140624a1}.

\begin{proof} [Proof of Lemma~\ref{lem:140624a1}]
Let \(G\subset [0, 1]\) be any closed set without inner points such that \(\mu_L (G) > 0\).
For example, this could be a set as discussed in \cite[Example~1.7.6, p.~30]{bogachev} or a \emph{generalized} (or \emph{fat}) Cantor set, cf. \cite[p.~39]{folland}.
We define 
	\[
	\q (x) \triangleq \int_0^x d_G (z) dz, \quad d_G (x) \triangleq \inf_{y \in G} |y - x|, \quad x \in [0, 1].
	\] 
	Evidently, \(\q\) is differentiable with Lipschitz continuous derivative \(\q' = d_G\). Furthermore, as $G$ is closed, it holds that
	\[
	\{ z \in (0, 1) \colon \q' (z) = 0 \} = \{ z \in (0, 1) \colon d_G (z) = 0 \} = (0, 1) \cap G, 
	\]
	which has strictly positive Lebesgue measure, as \(G\) has this property. 
	It is left to prove that \(\q\) is strictly increasing.
Let \(0 \leq x < y \leq 1\).
Recall that $G$ does not contain any open interval.
Therefore, the open set \(\{ z \in (x, y) \colon d_G (z) > 0 \} = (x, y) \cap G^c\) is nonempty and hence contains an open nonempty interval.
This yields that
\[
\q (y) - \q (x) = \int_x^y d_G (z) dz > 0
\]
and therefore, completes the proof.
\end{proof}

\begin{remark}
It is worth noting that there even exist $C^\infty$-functions with the properties from Lemma~\ref{lem:140624a1}.
To construct such a function, replace the function $d_G$ in the above proof by any $C^\infty$-function $f\colon[0,1]\to[0,1]$ such that $\{x\in[0,1]\colon f(x)=0\}=G$
(see, e.g., \cite[Exercise 3.18]{pugh}).
\end{remark}

Finally, let \(\mathsf{W}\) be a Brownian motion starting in a point \(x_0 \in (0, 1)\) that is absorbed\footnote{We could also take (instantaneous or slow) reflecting boundaries.}
in both \(0\) and \(1\), and let $\q$ be any function as described in Lemma~\ref{lem:140624a1}.
By virtue of \cite[Exercise~VII.3.18]{RY}, \(\Y \triangleq \q (\mathsf{W})\) is a general diffusion with scale function \(\s\triangleq \q^{-1}\). Furthermore, by the property of \(\q\) as described in Lemma~\ref{lem:140624a1}, \(\Y\) is a semimartingale
(see \cite[Section~5]{CinJPrSha}). Consequently, \(\Y\) is a general diffusion semimartingale that does not have the RP, as part~(b) of Theorem~\ref{theo: complete} is violated.
\end{example}

The remainder of this section is dedicated to a study of the extreme points of certain semimartingale problems that are connected to general diffusion semimartingales. We first recall the definition, where we adapt the concept from the monograph \cite{JS} by J.~Jacod and A.~N.~Shiryaev to our path-continuous setting.

\smallskip 
In view of Standing Assumption~\ref{SA: 1}, let \((B, C)\) be the semimartingale characteristics of~\(\X\). Of course, the semimartingale characteristics are not unique in the pathwise sense, so that we have to take a ``good version'' in the sense of \cite[Proposition~II.2.9]{JS}. 

\begin{definition}[Semimartingale problem]
	A probability measure \(\Q\) on \((\Omega, \cF, (\cF_t)_{t \geq 0})\) is said to be a solution to the {\em semimartingale problem} \((B, C, x_0)\), if \(\Q (\X_0 = x_0) = 1\) and \(\X\) is a continuous \(\Q\)-semimartingale with semimartingale characteristics \((B, C)\).
	The set of solutions is denoted by \(\mathscr{S} (B, C, x_0)\).
\end{definition} 

\begin{lemma}[\text{\cite[Corollary~III.2.8]{JS}}]
	The set \(\mathscr{S} (B, C, x_0)\) is convex. 
\end{lemma}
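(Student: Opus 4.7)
The plan is to verify directly that, for $\Q_1, \Q_2 \in \mathscr{S}(B,C,x_0)$ and $\lambda \in [0,1]$, the mixture $\Q \triangleq \lambda \Q_1 + (1-\lambda) \Q_2$ again lies in $\mathscr{S}(B,C,x_0)$. The initial condition is immediate: $\Q(\X_0 = x_0) = \lambda + (1-\lambda) = 1$. The substantive point is to show that $\X$ is a continuous $\Q$-semimartingale whose characteristics can be chosen to equal the good versions $(B, C)$ shared by the two solutions. In the path-continuous setting this reduces to showing that the two functionals
\[
M \triangleq \X - \X_0 - B, \qquad N \triangleq M^2 - C,
\]
are $\Q$-local martingales, since then $M$ is the continuous local martingale part of $\X$ and $\langle M \rangle = C$ under $\Q$.

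Because $B$ and $C$ are fixed predictable processes on $(\Omega, \cF, (\cF_t)_{t \geq 0})$, the processes $M$ and $N$ are the same pathwise functionals irrespective of the underlying measure, and by hypothesis both are continuous local martingales under $\Q_1$ and under $\Q_2$. To transfer the local martingale property to the mixture, I would use the pathwise localizing sequence
\[
\tau_n \triangleq \inf\bigl\{t \geq 0 \colon |M_t| + \mathrm{Var}_{[0,t]}(B) + C_t \geq n\bigr\}, \quad n \in \mathbb{N},
\]
which is a stopping time for $(\cF_t)_{t \geq 0}$ independently of the measure and which increases to $+\infty$ pointwise on $\Omega$. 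By continuity of the paths, both $M^{\tau_n}$ and $N^{\tau_n}$ are uniformly bounded; being bounded continuous local martingales under each $\Q_i$, they are in fact true martingales under each~$\Q_i$.

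The final step is the elementary observation that a process which is a true martingale under $\Q_1$ and under $\Q_2$ is automatically a true martingale under any convex combination $\Q$: for $A \in \cF_s$ and $s \leq t$, linearity of expectation gives
\[
\E^{\Q}\bigl[M^{\tau_n}_t \1_A\bigr] = \lambda\, \E^{\Q_1}\bigl[M^{\tau_n}_t \1_A\bigr] + (1-\lambda)\,\E^{\Q_2}\bigl[M^{\tau_n}_t \1_A\bigr] = \E^{\Q}\bigl[M^{\tau_n}_s \1_A\bigr],
\]
and likewise for $N^{\tau_n}$. Together with $\tau_n \nearrow \infty$ $\Q$-a.s., this shows that $M$ and $N$ are $\Q$-local martingales, so $\X$ is a $\Q$-semimartingale with characteristics $(B, C)$, proving $\Q \in \mathscr{S}(B, C, x_0)$. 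The only subtlety worth flagging is that localizing sequences do not transport between mutually singular measures in general, so one must use a pathwise-defined $\tau_n$ that serves as a localizing sequence simultaneously for $\Q_1$, $\Q_2$ and~$\Q$; this is the reason for the specific choice above.
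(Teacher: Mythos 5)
Your proof is correct, and it fleshes out what the paper itself only cites: the paper gives no argument of its own and simply refers to \cite[Corollary~III.2.8]{JS}. Your argument is precisely the idea behind that result, specialized to the path-continuous setting, which makes it cleaner than the general \cite{JS} proof (which must also handle the jump measure $\nu$). The key points are all present: reducing semimartingale characteristics $(B,C)$ to the local-martingale property of $M=\X-\X_0-B$ and $N=M^2-C$, and choosing a \emph{pathwise} localizing sequence that does not depend on the underlying measure, so that the stopped processes are bounded true martingales simultaneously under $\Q_1$, $\Q_2$ and the mixture; linearity of expectation then closes the argument.

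One small imprecision: you first assert that $\tau_n\nearrow\infty$ \emph{pointwise on $\Omega$}, but this need not hold — on the (measure-zero) set of paths where $B$ has locally infinite variation or $C$ explodes in finite time, $\tau_n$ can fail to increase to infinity. What is actually needed, and what you correctly invoke in the final sentence, is $\tau_n\nearrow\infty$ $\Q_1$-a.s.\ and $\Q_2$-a.s.\ (each being a consequence of $\X$ being a semimartingale under that measure), whence $\Q$-a.s.\ since $\Q\ll\Q_1+\Q_2$. With that reading the proof is complete and matches the approach of the cited reference.
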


The candidate characteristics \((B, C)\) are taken such that \(\P_{x_0}\in \mathscr{S} (B, C, x_0)\). However, the measure \(\P_{x_0}\) is not necessarily the only solution to the semimartingale problem \((B, C, x_0)\), as the following example illustrates. 

\begin{example} \label{ex: SBM}
	Let \(\Y\) be the sticky Brownian motion
with a fixed stickiness parameter $\rho>0$
as described in Example~\ref{ex: sticky BM}.
	Evidently, \(\Y\) is a semimartingale and its characteristics \((B^\Y, C^\Y)\) are given by 
	\[
	B^\Y = 0, \qquad C^\Y = \int_0^\cdot \1_{\{\Y_s \not = 0\}} ds.
	\]
	Under the law \(\mathcal{L}aw\, (\Y)\) of the process \(\Y\), the coordinate process \(\X\) is a semimartingale and its characteristics \((B, C)\) are given by 
	\[
	B = 0, \qquad C = \int_0^\cdot \1_{\{\X_s \not = 0 \}} ds.
	\]
	By definition, \(\mathcal{L}aw\, (\Y) \in \mathscr{S} (B, C, x_0)\) but it is not the only solution to this semimartingale problem.
Indeed, also the Wiener measure with starting value \(x_0\)
and the laws of sticky Brownian motions starting in $x_0$ with all other values for the stickiness parameter (and convex combinations of the latter) are solutions.
These observations also show that \(| \mathscr{S} (B, C, x_0)| = \infty\).
\end{example}

Of course, the Wiener measure is the unique solution to the semimartingale problem associated to \(B_t = 0\) and \(C_t = t\). This illustrates that the particular choice of the semimartingale characteristics is crucial for the uniqueness question. Regardless of the version of the characteristics taken into consideration, general diffusion semimartingales
with (locally) absolutely continuous scale functions
turn out to be extreme points. Our remaining program is a discussion of this observation. 

\smallskip 
As explained in Section~III.4.d of the monograph~\cite{JS}, extremality is closely connected to the RP. The next theorem provides the complete picture for general diffusion semimartingales.

\begin{theorem} \label{theo: extremal}
	Each of the equivalent items \textup{(a)--(c)} from Theorem~\ref{theo: complete} is equivalent to the following:
	\begin{enumerate}
		\item[\textup{(d)}] \(\P_{x_0}\) is extremal in \(\mathscr{S} (B, C, x_0)\).
	\end{enumerate}
\end{theorem}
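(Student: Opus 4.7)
The plan is to invoke the Jacod--Shiryaev characterization of extreme points of semimartingale problems in terms of the representation property and the triviality of the initial $\sigma$-field, and then combine it with Theorem~\ref{theo: complete}. Specifically, by \cite[Theorem~III.4.29]{JS} (adapted to the path-continuous setting), a probability measure $\Q\in\mathscr{S}(B,C,x_0)$ is extremal in $\mathscr{S}(B,C,x_0)$ if and only if the RP holds under $\Q$ and the initial $\sigma$-field $\cF_0$ is $\Q$-trivial. Thus, modulo verifying the triviality of $\cF_0$ under $\P_{x_0}$, the equivalence (a)$\Leftrightarrow$(d) is essentially built into the abstract theory; combined with Theorem~\ref{theo: complete}, this yields the equivalence of (d) with (b) and (c) as well.

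The remaining ingredient is therefore that $\cF_0$ is $\P_{x_0}$-trivial. This follows from Blumenthal's zero-one law: since $(\P_x)_{x\in J}$ is a strong Markov family, $\X_0=x_0$ $\P_{x_0}$-a.s., and $\cF_0=\bigcap_{s>0}\sigma(\X_r,r\le s)$ is the right-continuous germ $\sigma$-field, the standard argument (the Markov property at time $0$ gives $\P_{x_0}(A)=\P_{\X_0}(A)=\1_A$ for every $A\in\cF_0$) yields that $\cF_0$ is $\P_{x_0}$-trivial. This is precisely the observation alluded to in the introduction when discussing the extremality characterization.

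Putting these together: if (a) holds, then by Blumenthal's zero-one law the RP and the triviality of $\cF_0$ both hold under $\P_{x_0}$, hence (d) follows from \cite[Theorem~III.4.29]{JS}. Conversely, if (d) holds, then the same theorem gives that the RP holds, i.e., (a) holds. The chain (a)$\Leftrightarrow$(b)$\Leftrightarrow$(c) is Theorem~\ref{theo: complete}, so (b), (c), (d) are all equivalent to (a).

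Since the heavy lifting is done by Theorem~\ref{theo: complete} and the cited \cite[Theorem~III.4.29]{JS}, there is no genuine obstacle here; the only point requiring a moment's care is confirming that the abstract statement of \cite[Theorem~III.4.29]{JS} applies in the present path-continuous canonical setup and that $\cF_0$ is indeed $\P_{x_0}$-trivial, both of which are routine.
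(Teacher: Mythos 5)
Your proposal is correct and follows the paper's own argument exactly: it combines \cite[Theorem~III.4.29]{JS}, Blumenthal's zero-one law for the triviality of $\cF_0$ under $\P_{x_0}$, and Theorem~\ref{theo: complete}. The paper's proof is a one-line citation of precisely these three ingredients; you have simply spelled out the details.
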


\begin{proof} 
	This follows from our main Theorem~\ref{theo: complete}, Blumenthal's zero-one law and \cite[Therorem~III.4.29]{JS}. 	
\end{proof}

\begin{remark}
The question whether strong Markov solutions to non-well-posed semimartingale problems are extreme points has already been considered by J.~Jacod and M.~Yor~\cite{JY_77}. In \cite[Proposition~4.4]{JY_77}, they proved that all strong Markov solutions associated to the ``extended generator''
\[
\mathcal{L} f (x) \triangleq \frac{1}{2} \frac{|x|^{2 \alpha}}{(1 + |x|^\alpha)^2}\, f'' (x), \quad f \in C^\infty_c (\bR; \bR), \ \ \alpha \in (0, \tfrac{1}{2}), 
\] 
are extreme points of the corresponding semimartingale problem, which is well-known to be non-well-posed. The strong Markov solutions to this problem can be classified according to their behavior in the origin.
In the Remark on p.~123 from \cite{JY_77} it is asked whether \cite[Proposition~4.4]{JY_77} holds for more general semimartingale problems, even multidimensional and with jumps.
A similar problem was considered by D.~W.~Stroock and M.~Yor~\cite{StrYor}.
Section~4.5 from \cite{JY_77} hints that the \emph{strong} Markov property must be crucial for that question, as it provides an example of a Markov,  but not strong Markov, process (with jumps) that is \emph{not} an extreme point of the corresponding semimartingale problem.
A related observation was made in \cite[Example 2.13]{StrYor}, which gives a non-extremal solution to a one-dimensional path-continuous semimartingale problem that is strongly Markov but only for the non-right-continuous canonical filtration. 
In the introduction to \cite{StrYor} and \cite[Remark 2.15]{StrYor} it has been emphasized that this example does not rule out that solutions with the strong Markov property for the right-continuous canonical filtration need to be extremal. 
Theorem~\ref{theo: extremal} answers this question in terms of Example~\ref{ex: no RP} that provides a strong Markov semimartingale for the right-continuous canonical filtration that is {\em not extremal}. 
\end{remark}

\section{Proof of Theorem~\ref{theo: complete}} \label{sec: pf}

This section is dedicated to the proof of Theorem~\ref{theo: complete}. We proceed in three steps. First, we show the equivalence of (b) and~(c), then the implication
(b)~\(\Longrightarrow\)~(a) and finally, (a)~\(\Longrightarrow\)~(b).

\newcommand*{\genRNderiv}{\frac{d\q\phantom{_L}}{d\mu_L}}

\subsection{Proof of (b) \(\Longleftrightarrow\) (c)}
Essentially, the equivalence follows from Theorem~C.7 in~\cite{criensurusov_22}.
For completeness, we provide some details.
Let $\genRNderiv$ be the
\emph{generalized Radon--Nikodym derivative}
(see Appendix~C in \cite{criensurusov_22})
of the measure $d\q$ on $(\s(J^\circ),\mathcal B(\s(J^\circ)))$ induced by the function $\q$
w.r.t. the Lebesgue measure $\mu_L$.
This is defined as the
$d\q,d\mu_L$-a.e. unique
Borel function $f\colon\s(J^\circ)\to[0,\infty]$ such that
$f=\frac{d\q}{d\gamma}/\frac{d\mu_L}{d\gamma}$
$d\q,d\mu_L$-a.e., where $d\gamma\triangleq d\q+d\mu_L$.
Alternatively, $\genRNderiv$ is called the
\emph{Lebesgue derivative} of $d\q$ w.r.t. $\mu_L$
in the seminal monograph
\cite{Shiryaev}
by A.~N.~Shiryaev
(see the object ``$\tilde z/z$'' in formula~(29) of \cite[Section~III.9]{Shiryaev}).
By \cite[Theorem C.7]{criensurusov_22},
$\genRNderiv=q'$ $d\q,d\mu_L$-a.e. on $\s(J^\circ)$
(notice that $q'$ exists everywhere on $\s(J^\circ)$ up to a countable set, while the latter does not matter both under $\mu_L$ and under $d\q$, as $\q$ is continuous).
This yields that
\begin{align*}
\text{(b)}
&\;\;\Longleftrightarrow\;\;
\genRNderiv>0\;\;\mu_L\text{-a.e.}
\\&\;\;\Longleftrightarrow\;\;
d\mu_L\ll d\q \text{ on } \s (J^\circ) \phantom \int
\\&\;\;\Longleftrightarrow\;\;
d\mu_L\circ\varphi^{-1}\ll d\q\circ\varphi^{-1} \text{ on } \varphi (\s (J^\circ))  \phantom \int
\end{align*}
for any continuous strictly increasing function
$\varphi\colon\s(J^\circ)\to\bR$. Notice that 
\[
d \mu_L \circ \q^{-1} = d \s, \qquad d \q \circ \q^{-1} = d \mu_L. 
\]
Hence, taking $\varphi=\q$ in the above equivalence, we conclude that
$$
\text{(b)}
\;\;\Longleftrightarrow\;\;
d\s\ll d\mu_L
\text{ on }J^\circ
\;\;\Longleftrightarrow\;\;
\text{(c)},
$$
which completes the proof.\qed

\smallskip
Lastly, we remark that an alternative
proof of the equivalence
(b)~\(\Longleftrightarrow\)~(c)
is due to \cite[Exercise 5.8.54]{bogachev} or \cite[Exercise 3.21]{leoni}.
To our taste, the above proof appears to be somewhat simpler (cf. the hint in \cite{bogachev}).

\subsection{Proof of (b) \(\Longrightarrow\) (a)}
We adapt the strategy from the proof of \cite[Theorem~III.4.29]{JS}. For contradiction, assume that the RP fails. By \cite[Corollary~III.4.27]{JS}, there exists a
bounded martingale \(\Z\) with \(0 \leq \Z \leq 2\), \(\Z_0 = 1\), \(\P_{x_0}(\Z_\infty = 1) < 1\) and \(\langle \Z, \X^c \rangle = 0\).  
We define a new probability measure by the formula 
\[
\Q (A) \triangleq \E^{\P_{x_0}} \big[ \Z_\infty \1_A \big], \quad A \in \cF.
\] 
In the following, we distinguish three cases:
\begin{enumerate}
	\item[(i)] \(J = (0, \infty)\); 
	\item[(ii)] \(J = [0, \infty)\) and the origin is an
(instantaneously or slowly)
reflecting boundary for \((x \mapsto \P_x)\); 
	\item[(iii)] \(J = [0, \infty)\) and the origin is an absorbing boundary for \((x \mapsto \P_x)\).
\end{enumerate}
All other cases can be treated by similar methods. 
	
\smallskip
{\em Case~(i):} 
As part~(b) from Theorem~\ref{theo: complete} holds, the occupation time formula (\cite[Theorem~29.5]{kallenberg}) yields that 
\begin{align}\label{eq: no measure on zero sets}
	\int_0^\cdot \1_{\{\q'_+ (\s (\X_s)) = 0\}} d \langle \s (\X), \s (\X)\rangle_t = \int \1_{\{ \q'_+ (x) = 0\}}L^x_\cdot (\s (\X)) dx = 0.
\end{align}
Since \(\X = \q (\s (\X))\), using the generalized It\^o formula (\cite[Theorem~29.5]{kallenberg}) and the facts that \(\q\) is the difference of two convex functions
and $\s(\X)$ is a local $\P_{x_0}$-martingale (\cite[Corollary~V.46.15]{RW2}),
we obtain that
\begin{align}\label{eq: continuous martingale part}
	d \X^c_t = \q'_+ (\s (\X_t)) d \s (\X_t).
\end{align}
From \eqref{eq: no measure on zero sets} and \eqref{eq: continuous martingale part}, it follows that 
\begin{equation} \label{eq: formula continuous local martingale part 1}
	\begin{split}
	d \s (\X_t) &= \1_{\{ \q'_+ (\s (\X_t)) \not = 0\}}d \s (\X_t) \phantom \int
	\\&= \frac{\1_{\{ \q'_+ (\s (\X_t)) \not = 0\}}}{\q'_+ (\s (\X_t))} \, \q'_+ (\s (\X_t)) d \s (\X_t)
	\\&= 
	\frac{\1_{\{ \q'_+ (\s (\X_t)) \not = 0\}}}{\q'_+ (\s (\X_t))} \, d \X^c_t.
\end{split}
\end{equation}
Next, we use a martingale problem argument.
Let \(f\in C_b ((\s (0), \s (\infty)); \bR)\) be the difference of two convex functions such that \(d f'_+ = 2 g  d \m \circ \s^{-1}\) on \((\s (0),\s(\infty))\) for some \(g \in C_b ((\s (0), \s (\infty)); \bR)\). 
Using the generalized It\^o formula (\cite[Theorem 29.5]{kallenberg}), the fact that the semimartingale local time of a local martingale is jointly continuous (\cite[Theorem~VI.1.7]{RY}) and~\eqref{eq: formula continuous local martingale part 1}, we obtain
\begin{align*}
d f (\s (\X_t))
&=
f'_+ (\s (\X_t)) d \s (\X_t) + d \int L^{y}_t (\s (\X)) g (y)\,\m\circ\s^{-1} (dy)
\\
&=
f'_+ (\s (\X_t)) \frac{\1_{\{ \q'_+ (\s (\X_t)) \ne 0\}}}{\q'_+ (\s (\X_t))} \, d \X^c_t + d \int L^{y}_t (\s (\X)) g (y)\,\m\circ\s^{-1} (dy).
\end{align*}
Recall
that \(\s (\X)\) is a general diffusion on natural scale with speed measure \(\m \circ \s^{-1}\), cf. \cite[Exercise VII.3.18]{RY}.
Thanks to the occupation time formula for general diffusions
on natural scale
(\cite[Theorem V.49.1]{RW2}), it holds that 
$$
\int L^{y}_\cdot (\s (\X)) g (y)\,\m\circ\s^{-1} (dy) = \int_0^\cdot g (\s (\X_s))\,ds.
$$
Putting these pieces together, we conclude that 
\begin{align*}
	d f (\s (\X_t)) - g (\s (\X_t)) dt = f'_+ (\s (\X_t)) \frac{\1_{\{ \q'_+ (\s (\X_t)) \not = 0\}}}{\q'_+ (\s (\X_t))} \, d \X^c_t.
\end{align*}
Because \(\langle \Z, \X^c\rangle = 0\), Girsanov's theorem (\cite[Theorem~III.3.11]{JS}) yields that the process \(\X^c\) is a local \(\Q\)-martingale. Consequently, also
\[
f (\s (\X)) - \int_0^\cdot g (\s (\X_s)) ds 
\]
is a local \(\Q\)-martingale. By the martingale problem for diffusions as given by \cite[Theorem~A.3, Lemma~A.4]{criensurusov_22}, we conclude that \(\Q = \P_{x_0}\). This, however, is a contradiction to \(\P_{x_0} (\Z_\infty = 1) < 1\). Summing up, we proved that the RP holds.\qed
\\

\smallskip 
{\em Case~(ii):}
The difference compared to Case~(i) is that the state space is not open, which requires more care to get a formula like~\eqref{eq: formula continuous local martingale part 1}.

Recall two facts: The space-transformed process \(\s (\X)\) is a semimartingale (as it is a time-change of Brownian motion, see \cite[Theorem~V.47.1]{RW2} and \cite[Corollary~10.12]{Jacod}), and \(\q \colon \s(J) = [\s(0), \s(\infty)) \to \bR\) is a continuous function such that \(\q'_+\) exists everywhere on \(\s(J)\) as a right-continuous function of (locally) finite variation. The latter fact follows from our assumptions that \(\X\) is a semimartingale and the origin is reflecting, cf. \cite[Section~5]{CinJPrSha}. By virtue of these facts, we can apply the generalized It\^o formula from \cite[Lemma~B.25]{criensurusov_22} and obtain that 
\begin{align*}
	d \X_t = d \q (\s (\X_t)) = \q'_+ (\s (\X_t)) d \s (\X_t) + \frac{1}{2}\, d \int_{(\s (0), \s (\infty))} L^{y-}_t (\s (\X)) \q'_+ (dy), 
\end{align*}
where \(\q'_+ (dx)\) is the second derivative measure associated to \(\q'_+\), i.e., 
\[
\q'_+ ( (a, b]) = \q'_+ (b) - \q'_+ (a), \quad a, b \in \s (J^\circ), \, a < b.
\] 
In view of this equation, we get that 
\begin{align} \label{eq: CLM part reflecting}
	d \X^c_t = \q'_+ (\s (\X_t)) d \s (\X)^c_t.
\end{align}
Due to \cite[V.47.23 (ii)]{RW2}, the process 
\begin{align*}
\s (\X) - \tfrac{1}{2}L^{\s (0)} (\s (\X))
\end{align*}
is a local martingale. Hence, 
\[
\s (\X)^c = \s (\X) - \s(x_0) - \tfrac{1}{2} L^{\s (0)} (\s (\X)).
\] 
As \eqref{eq: no measure on zero sets} also holds in the current setting, we obtain that 
\begin{equation}  \label{eq: continuous local martingale part 3}
	\begin{split}
	d \s (\X_t) &= d \s (\X)^c_t + \tfrac{1}{2} dL^{\s (0)}_t (\s (\X)) 
	\\&= \1_{\{\q'_+ (\s (\X_t)) \not = 0\}} d \s (\X)^c_t + \tfrac{1}{2} dL^{\s (0)}_t (\s (\X)) 
		\\&= \frac{\1_{\{\q'_+ (\s (\X_t)) \not = 0\}}}{\q'_+ (\s(\X_t))} \q'_+ (\s (\X_t)) d \s (\X)^c_t + \tfrac{1}{2} dL^{\s (0)}_t (\s (\X)) 
	\\&= \frac{\1_{\{\q'_+ (\s (\X_t)) \not = 0\}}}{\q'_+ (\s(\X_t))} d \X^c_t + \tfrac{1}{2} dL^{\s (0)}_t (\s (\X)).
\end{split}
\end{equation} 
Let \(f\in C_b ([\s (0), \s (\infty)); \bR)\) be such that the right-derivative \(f'_+\) exists as a right-continuous function \([\s (0), \s (\infty))\to \bR\) of locally finite variation, and \(d f'_+ = 2 g  d \m \circ \s^{-1}\) on \((\s (0), \s(\infty))\) and \(f'_+ (\s(0)) = 2 g (\s (0)) \m (\{0\})\) for some \(g \in C_b ([\s (0), \s (\infty)); \bR)\). 
Again using the generalized It\^o formula from \cite[Lemma B.25]{criensurusov_22}, the fact that the local time process \(L (\s (\X))\) is space-continuous on \((\s (0), \s (\infty))\) by \cite[Theorem V.49.1]{RW2} and formula~\eqref{eq: continuous local martingale part 3},
we obtain that 
\begin{equation*}
\begin{split}
d f (\s (\X_t)) &= f'_+ ( \s(\X_t)) d \s (\X_t) + d \int_{ (\s(0), \s(\infty))} L^{y-}_t (\s(\X)) g (y) \,\m\circ\s^{-1}(dy)
\\
&=
f'_+ (\s (\X_t)) \Big(\frac{\1_{\{\q'_+ (\s (\X_t)) \not = 0\}}}{\q'_+ (\s(\X_t))} d \X^c_t + \tfrac{1}{2} d L^{\s (0)}_t (\s(\X)) \Big)
\\
&\hspace{3.675cm}
+d \int_{ (\s(0), \s(\infty))} L^{y}_t (\s(\X)) g (y) \,\m\circ\s^{-1}(dy)
\\
&=
f'_+ (\s (\X_t))\frac{\1_{\{\q'_+ (\s (\X_t)) \not = 0\}}}{\q'_+ (\s(\X_t))} d \X^c_t + g (\s (0)) \m (\{0\}) d L^{\s (0)}_t (\s(\X))
\\
&\hspace{3.675cm}
+d \int_{ (\s(0), \s(\infty))} L^{y}_t (\s(\X)) g (y) \,\m\circ\s^{-1}(dy)
\\
&=
f'_+ (\s (\X_t))\frac{\1_{\{\q'_+ (\s (\X_t)) \not = 0\}}}{\q'_+ (\s(\X_t))} d \X^c_t 
+ d \int_{[\s(0), \s(\infty))} L^{y}_t (\s(\X)) g (y)\,\m\circ\s^{-1}(dy)
\\
&=
f'_+ (\s (\X_t))\frac{\1_{\{\q'_+ (\s (\X_t)) \not = 0\}}}{\q'_+ (\s(\X_t))} d \X^c_t + g (\s (\X_t))\,dt,
\end{split}
\end{equation*}
where in the last step we apply the occupation time formula for general diffusions on natural scale to $\s(\X)$ (see \cite[Theorem 136, p.~160]{freedman}\footnote{A delicate point:
When $0$ is reflecting, the generalization of the occupation time formula as in \cite[Theorem V.49.1]{RW2} holds also with a nonvanishing $g(\s(0))$.
This is discussed in \cite[Theorem 136, p.~160]{freedman}.}).
From this point on, we can conclude precisely as in Case~(i).\qed 
\\

\smallskip 
{\em Case~(iii):}
Let \(f\in C_b ([\s (0), \s (\infty)); \bR)\) be such that, restricted to \((\s (0), \s (\infty))\), \(f\) is the difference of two convex functions and \(d f'_+ = 2 g  d\m \circ \s^{-1}\) on \((\s (0), \s (\infty))\) for some \(g \in C_b ([ \s (0), \infty); \bR)\) with \(g (\s (0)) = 0\).
By similar arguments as in Case (i), we obtain, a.s. for all \(t < \inf \{t \geq 0 \colon \X_t \not \in J^\circ\}\), 
\[
 f (\s (\X_t)) - \int_0^t g (\s (\X_s)) ds = \int_0^t f'_+ (\s (\X_s)) \frac{\1_{\{ \q'_+ (\s (\X_s)) \not = 0\}}}{\q'_+ (\s (\X_s))} \, d \X^c_s.
\] 
By virtue of \cite[Exercise IV.1.48]{RY}, and using that \(\X\) is absorbed in its attainable boundary point and \(g (\s (0)) = 0\), it follows that 
\[
f (\s (\X)) - \int_0^\cdot g (\s (\X_s)) ds
\] 
is a (globally defined) local \(\Q\)-martingale. Now, we can conclude as in Case~(i).\qed

	\subsection{Proof of (a) \(\Longrightarrow\) (b)}
	To ease our notation, we assume that \(J\) is either \((0, \infty)\) or \([0, \infty)\). This allows us to work only with the right derivative \(\q'_+\).
	Suppose that the RP holds. 
	Recall again that the space-transformed process \(\s (\X)\) is a semimartingale (as it is a time-change of Brownian motion, see \cite[Theorem~V.47.1]{RW2} and \cite[Corollary~10.12]{Jacod}).
	Then, by the RP, there exists a process \(H \in L^2_\textup{loc} (\X^c)\) such that 
	\[
	\int_0^{\cdot} \1_{\{ \q'_+ (\s (\X_s)) = 0\}} \, d \s (\X)^c_s = \int_0^\cdot H_s \, d \X^c_s. 
	\] 
Using \eqref{eq: continuous martingale part} or \eqref{eq: CLM part reflecting}, we get that 
	\begin{align*}
		\int_0^{\cdot} \1_{\{ \q'_+ (\s (\X_s)) = 0\}} \, d \s (\X)^c_s = \int_0^\cdot H_s \q'_+ (\s (\X_s)) \, d \s (\X)^c_s. 
	\end{align*}
	Subtracting one side from the other and then computing the quadratic variation, it follows that a.s. \(d \langle \s (\X), \s (\X) \rangle\)-a.e.
	\[
	\1_{\{ \q'_+ (\s (\X)) = 0\}} = H \q'_+ (\s (\X)).
	\] 
	Evidently, this yields that a.s. \(d \langle \s (\X), \s (\X) \rangle\)-a.e.
	\[
	\q'_+ (\s (\X)) \not = 0.
	\] 
	Take \(p \in \s (J^\circ \setminus \{x_0\})\) and set \(T_p \equiv T_p (\s (\X)) \triangleq \inf \{t \geq 0 \colon \s (\X_t) = p \}\). Then, by the occupation time formula (\cite[Theorem~29.5]{kallenberg}), a.s. on \(\{ T_p < \infty\}\),
	\begin{align*}
		\int L_{T_p}^x (\s (\X)) \1_{\{ \q'_+ (x) = 0\}} dx = \int_0^{T_p} \1_{\{ \q'_+ (\s (\X_s)) = 0\}} d \langle \s (\X), \s (\X) \rangle_s = 0.
	\end{align*}
Thanks to \cite[Theorem~V.49.1]{RW2} and \cite[Corollary~29.18]{kallenberg},
a.s. on \(\{T_p < \infty\}\), \(L^x_{T_p} (\s (\X)) > 0\) for all \(x \in (\s(x_0) \wedge p, \s(x_0) \vee p)\).
By the regularity of the diffusion and because \(x_0\) is non-absorbing (cf. \cite[Section~2.2]{freedman}),
it holds that \(\P_{x_0} (T_p < \infty) > 0\).
Altogether, we conclude that 
	\[
	\mu_L ( \{ x \in (\s(x_0) \wedge p, \s(x_0) \vee p) \colon \q'_+ (x) = 0 \} ) = 0.
	\] 
	As \(p \in \s (J^\circ \setminus \{x_0\})\) was arbitrary, this implies that 
	\[
	\mu_L (\{ x \in \s (J^\circ) \colon \q'_+ (x) = 0\} ) = 0.
	\] 
	The proof of the implication (a) \(\Longrightarrow\) (b) is complete.\qed

\bibliographystyle{plain}

\end{document}